\newcommand\reallywidehat[1]{%
\savestack{\tmpbox}{\stretchto{%
  \scaleto{%
    \scalerel*[\widthof{\ensuremath{#1}}]{\kern-.6pt\bigwedge\kern-.6pt}%
    {\rule[-\textheight/2]{1ex}{\textheight}}
  }{\textheight}%
}{0.5ex}}%
\stackon[1pt]{#1}{\tmpbox}%
}
\newcommand\reallywidecheck[1]{%
\savestack{\tmpbox}{\stretchto{%
  \scaleto{%
    \scalerel*[\widthof{\ensuremath{#1}}]{\kern-.6pt\bigwedge\kern-.6pt}%
    {\rule[-\textheight/2]{1ex}{\textheight}}
  }{\textheight}%
}{0.5ex}}%
\stackon[1pt]{#1}{\scalebox{-1}{\tmpbox}}%
}
\newcommand{\oplam}{\mbox{\LARGE $\curlywedge$}}
\numberwithin{equation}{section}
\newcommand{\supp}{\mbox{\rm supp}}
\newcommand{\RR}{{\mathbb R}}
\newcommand{\ZZ}{{\mathbb Z}}
\newcommand{\CC}{{\mathbb C}}
\newcommand{\TT}{\mathbb T}
\newcommand{\NN}{\mathbb N}
\newcommand{\KK}{\mathbb K}
\newcommand{\cL}{{\mathcal L}}
\newcommand{\cM}{{\mathcal M}}
\newcommand{\cS}{{\mathcal S}}
\newcommand{\dd}{\mbox{\rm d}}
\newcommand{\eps}{\varepsilon}
\newcommand{\Cu}{C_{\mathsf{u}}}
\newcommand{\Cc}{C_{\mathsf{c}}}
\newcommand{\Cz}{C^{}_{0}}
\newcommand{\LI}{\mathcal{LI}}
\newcommand{\DE}{\mathcal{DE}}
\theoremstyle{plain}
\newtheorem{theorem}{Theorem}[section]
\newtheorem{proposition}[theorem]{Proposition}
\newtheorem{lemma}[theorem]{Lemma}
\newtheorem{corollary}[theorem]{Corollary}
\newtheorem{fact}[theorem]{Fact}
\theoremstyle{definition}
\newtheorem{definition}[theorem]{Definition}
\newtheorem{remark}[theorem]{Remark}
\begin{document}
\title[Fourier transformable measures]{Fourier Transformable Measures with Meyer set support and their lift to the cut and project scheme}

\author{Nicolae Strungaru}
\address{Department of Mathematical Sciences, MacEwan University \\
10700 -- 104 Avenue, Edmonton, AB, T5J 4S2\\
and \\
Institute of Mathematics ``Simon Stoilow''\\
Bucharest, Romania}
\email{strungarun@macewan.ca}
\urladdr{http://academic.macewan.ca/strungarun/}

\begin{abstract} In this paper, we prove that given a cut-and-project scheme $(G, H, \mathcal{L})$ and a compact window $W \subseteq H$, the natural projection gives a bijection between the Fourier transformable measures on $G \times H$ supported inside the strip $\cL \cap (G \times W)$ and the Fourier transformable measures supported inside $\oplam(W)$, and relate their Fourier transforms. We use this formula to relate the Fourier transforms of the measures, and explain how one can use this relation to re-derive some known results about Fourier analysis of measures with Meyer set support.
\end{abstract}

\renewcommand{\thefootnote}{\fnsymbol{footnote}}
\footnotetext{ \emph{MSC numbers: 52C23, 43A60, 43A25}}
\renewcommand{\thefootnote}{\arabic{footnote}}

\maketitle

\section{Introduction}

After the discovery of quasi-crystals \cite{She}, it has become clear that we need to better understand the process of diffraction. Mathematically, the diffraction pattern of a solid can be viewed as the Fourier transform $\widehat{\gamma}$ of the autocorrelation measure $\gamma$ of the structure (see \cite{HOF3} for the setup and the monographs \cite{TAO,TAO2} for a general review of the theory). The measure $\gamma$ is positive definite and therefore it is Fourier transformable as a measure \cite{ARMA1,BF,MoSt} with positive Fourier transform $\widehat{\gamma}$. It is this measure $\widehat{\gamma}$ which models the diffraction of our solid.

Structures with pure point diffraction, that is structures for which $\widehat{\gamma}$ is a pure point measure are now very well understood. Building on the earlier work of Gil deLamadrid--Argabright \cite{ARMA}, Solomyak \cite{SOL,SOL1}, Lee--Moody--Solomyak \cite{LMS}, Baake--Moody \cite{BM}, Baake--Lenz \cite{BL}, Gouere \cite{Gouere-1,Gouere-2}, Moody--Strungaru \cite{MS}, Meyer \cite{Mey2}, pure point diffraction was characterized in \cite{LSS,LSS2}. The focus now shifted towards models with mixed diffraction spectrum, and especially those with a large pure point part.

The best mathematical models for Delone sets with a large pure point spectrum and (generic) positive entropy are Meyer sets. They have been introduced in the pioneering work of Meyer \cite{Meyer}, and popularized in the area of Aperiodic Order by Moody \cite{MOO,Moody} and Lagarias \cite{LAG,LAG1}. They are usually constructed via a cut and project scheme (or simply CPS) and can be characterised via harmonic analysis, discrete geometry, algebra and almost periodicity \cite{Meyer,Moody,NS11}. The basic idea behind a CPS is to project points from a higher dimensional lattice which lie within a bounded strip of the real space(see Def.~\ref{CPS} below for the exact definition). If the width of the strip (called the window) is regular, then the resulting model set is pure point diffractive \cite{Hof2,Martin2,BM,CRS2}. Recent work shown that this holds for a larger class of weak model sets \cite{BHS,KR,KR2,KRS,NS21c}.

As subsets of regular model sets, Meyer sets still exhibit a large pure point spectrum \cite{NS1,NS2,NS5,NS11,NS21,NS21b} and a highly ordered continuous spectrum \cite{NS1,NS5,NS21,NS21b}. The long range order of the spectrum of Meyer sets is inherited from a covering regular model set \cite{NS21,NS21b}, which in turn is a residue of the Poisson summation formula for the lattice in the CPS \cite{TAO,LO,CRS,CRS2}.

One would expect to be possible to related the diffraction of a Meyer set (or more generally a measure with Meyer set support) directly to the lattice $\cL$ in the CPS. It is the goal of this paper to establish this connection. Let us briefly explain our approach.

Fix a CPS $(G, H, \cL)$ and a compact set $W \subseteq H$. It is easy to see that
$$
\gamma= \sum_{x \in \oplam(W)} c_x \delta_x \qquad  \longleftrightarrow \qquad  \eta= \sum_{x \in \oplam(W)} c_x \delta_{(x,x^\star)}
$$
establishes a bijection between translation bounded measures supported inside $\oplam(W)$ and translation bounded measures supported inside $\cL \cap (G \times W)$.
We first show in Prop.~\ref{prop pd can be lifted} that $\gamma$ is positive definite if and only if $\eta$ is positive definite. Since each Fourier transformable measure supported inside a Meyer set can be written as a linear combination of positive definite measures supported inside a common model set, we use this to establish in Thm.~\ref{T1} that $\gamma$ is Fourier transformable if and only if $\eta$ is Fourier transformable, and relate their Fourier transform (see \eqref{eq6}).

We complete the paper by discussing in Section~\ref{sect discc} how these results can be used to re-derive the known properties of diffraction for measures with Meyer
set support, and potentially used to prove new results.

\section{Definitions and Notations}

Throughout the paper, $G$ denotes a second countable locally compact Abelian group (LCAG). By $\Cu(G)$ we denote the space of uniformly continuous and bounded functions on $G$. This is a Banach space with respect to the sup norm $\| . \|_\infty$. As usual, we denote by $\Cz(G)$ the subspace of $\Cu(G)$ consisting of functions vanishing at infinity, and by $\Cc(G)$ the subspace of compactly supported continuous functions. Note that $\Cc(G)$ is not complete in $(\Cu(G), \| . \|_\infty)$.

In the spirit of \cite{ARMA} we denote by
\begin{displaymath}
K_2(G) := \mbox{Span} \{ \varphi *\psi : \varphi,\psi \in \Cc(G) \} \,.
\end{displaymath}

\smallskip

Given two LCAG's $G$ and $H$ and two functions $g : G \to \CC, h : H \to \CC$, we denote by $g \odot h : G \times H \to \CC$ their tensor product
\begin{displaymath}
( g \odot h) (x,y)= g(x) h(y) \,.
\end{displaymath}
It is obvious that whenever $\varphi \in \Cc(G), \psi \in \Cc(H)$ we have $\varphi \odot \psi \in \Cc(G \times H)$. Moreover, if $\varphi \in K_2(G)$ and $\psi \in K_2(H)$, we have $\varphi \odot \psi \in K_2(G \times H)$.

In the rest of this section, we review some of the basic concepts which are important for this paper. For a more general review of these we recommend \cite{TAO,TAO2}.

\subsection{Measures}

In the spirit of Bourbacki \cite{BOURB}, by a measure we understand a linear functional on $\Cc(G)$ which is continuous with respect to the inductive topology. This notion corresponds to the classical concept of a Radon measure (see \cite[Appendix]{CRS2}). For the case $G=\RR^d$ a clear exposition of this is given in \cite{TAO}.

\begin{definition} A linear functional $\mu : \Cc(G) \to \CC$ is called a \textbf{Radon measure} if for each compact set $K \subseteq G$ there exists a constant $C_K$ such that, for all $\varphi \in \Cc(G)$ with $\supp(\varphi) \subseteq K$ we have
\begin{displaymath}
\left| \mu(\varphi) \right| \leq C_K \| \varphi \|_\infty \,.
\end{displaymath}
We will often write $\int_G \varphi(t) d \mu(t)$ instead of $\mu(\varphi)$.

$\mu$ is called \textbf{positive} if for all $\varphi \in \Cc(G)$ with $\varphi \geq 0$ we have $\mu(\varphi) \geq 0$.
\end{definition}

By the Riesz representation Theorem \cite{Rud}, a positive Radon measure is simply a positive regular Borel measure. Moreover, each Radon measure is a linear combination of (at most four) positive Radon measures \cite[Appendix]{CRS2}.

Next, we review the total variation of a measure.

\begin{definition}
Given a measure $\mu$, we can define \cite{Ped,ReiterSte,CRS2} a positive measure $\left| \mu \right|$, called the \textbf{total variation} of $\mu$, such that for all $\varphi \in \Cc(G)$ with $\varphi \geq 0$ we have
\begin{displaymath}
 \left| \mu \right| (\varphi) = \sup \{ \left|\mu(\psi) \right| : \psi \in \Cc(G), \mbox{ with } |\psi| \leq \varphi \} \,.
\end{displaymath}
\end{definition}

We are now ready to introduce the concept of translation boundedness for measures and norm almost periodicity.

\begin{definition} Let $A \subseteq G$ be a fixed pre-compact set with non-empty interior. We define the \textbf{ $A$-norm} of $\mu$ via
\begin{displaymath}
\| \mu \|_A := \sup_{x \in G} \left| \mu \right| (x+A) \,.
\end{displaymath}

A measure $\mu$ is called \textbf{translation bounded} if $\| \mu \|_A <\infty$.
\end{definition}

\begin{remark}\cite{BM,SS} Different pre-compact sets $A_1, A_2$ with non-empty interior define equivalent norms. Therefore, the definition of translation boundedness does not depend on the choice of $A$.

This allows us to define
\begin{displaymath}
\cM^\infty(G) := \{ \mu : \mu \mbox{ is a translation bounded measure } \} \,.
\end{displaymath}
Then $(\cM^\infty(G), \| . \|_A)$ is a normed space. It is in fact a Banach space \cite{CRS3}.
\end{remark}

\bigskip

Next we review the definition of norm almost periodicity as introduced in \cite{BM}.

\begin{definition} Let $ A \subseteq G$ be a fixed pre-compact set with non-empty interior. A measure $\mu \in \cM^\infty(G)$ is called \textbf{ norm almost periodic} if for each $\eps >0$ the set
\begin{displaymath}
P_\eps^A (\mu) := \{ t \in G : \| T^t \mu -\mu \|_A < \eps \} \,,
\end{displaymath}
of \textbf{$\eps$-norm almost periods} of $\mu$ is relatively dense.
\end{definition}

As discussed above, different pre-compact sets define equivalent norms. This means that while the set of $\eps$-norm almost periods on $\mu$ depends on the choice of $A$, the almost periodicity of $\mu$ is not dependent of this choice.

Any norm almost periodic measure is strongly almost periodic \cite{BM}, and the two concepts are equivalent for measures with Meyer set support \cite{BM}. In general, norm almost periodicity is an uniform version of strong almost periodicity \cite[Thm.~4.7]{SS}. The class of norm almost periodic pure point measure was studied in detail and characterized in \cite{NS11}.

\smallskip
Let us next recall positive definiteness for functions and measures. For more details, we recommend \cite{BF,MoSt}.

\begin{definition}\label{def PD funct} A function $f : G\longrightarrow \CC$ is called \textbf{positive definite} if, for all $ n \in \NN$ and all $x_1,\ldots, x_n\in G$, the
matrix $\left(f (x_k - x_l)\right)_{k,l=1,\ldots, n}$ is positive Hermitian. This is equivalent to
$$
\sum_{k,l=1}^n \overline{c_l} f (x_k - x_l) c_k \geq 0 \qquad \forall  n \in \NN, x_1,\ldots, x_n\in G, c_1, \ldots , c_n \in \CC
$$

A measure $\mu$ is called \textbf{positive definite} if for all $ \varphi \in \Cc(G)$ we have
\begin{displaymath}
\mu (\varphi * \widetilde{\varphi}) \rangle \geq 0 \,.
\end{displaymath}
\end{definition}
This is equivalent to $\mu*\varphi*\tilde{\varphi}$ being a positive definite function for all $\varphi \in \Cc(G)$ \cite{BF,MoSt}.
\smallskip

We complete the subsection by reviewing the notion of Fourier transformability for measures. For a more detailed review of the subject, we recommend \cite{MoSt}.

\begin{definition} A measure $\mu$ on $G$ is called \textbf{Fourier transformable} if there exists a measure $\widehat{\mu}$ on $\widehat{G}$ such that, for all $\varphi \in K_2(G)$ we have $|\check{\varphi}| \in L^1( |\widehat{\mu} |)$ and
\begin{displaymath}
\int_{G} \varphi(t) \dd \mu(t)  = \int_{\widehat{G}} \widecheck{\varphi}(\chi) \dd \widehat{\mu}(\chi) \,.
\end{displaymath}
\end{definition}

\subsection{Cut and Project Schemes and Meyer sets}

In this part, we review some notions related to the cut and project formalism. For more details, we recommend \cite{TAO,MOO,Moody}.

\begin{definition}\label{CPS} By a \textbf{cut and project scheme} (or simply \textbf{CPS}) we understand a triple $(G, H, \cL)$ consisting of a $\sigma$-compact LCAG $G$, a LCAG $H$, and a lattice $\cL \subset G \times H$ such that
\begin{itemize}
  \item [(i)] $\pi_H(\cL)$ is dense in $H$.
  \item [(ii)] the restriction $\pi_G|_\cL$ of the first projection $\pi_G$ to $\cL$ is one to one.
\end{itemize}
\end{definition}

Given a CPS $(G,H, \cL)$ we will denote by $L:= \pi_G(\cL)$. Then, $\pi_G$ induces a group isomorphism between $\cL$ and $L$. Composing the inverse of this with the second projection $\pi_H$ we get a mapping
\begin{displaymath}
\star : L \to H
\end{displaymath}
which we will call the \textbf{$\star$-mapping}. We then have
\begin{displaymath}
\cL = \{ (x,x^\star) : x \in L \} \,.
\end{displaymath}

\smallskip

Given a CPS $(G, H, \cL)$ and a subset $W \subseteq H$ we can define
$$
\oplam(W):= \{ x \in L : x^\star \in W \} \,.
$$
When $W$ is compact, we will call $\oplam(W)$ a \textbf{weak model set}. If furthermore $W$ has non-empty interior $\oplam(W)$ is called a \textbf{model set}.

\smallskip

Next, let us review the concept of a Meyer set, which plays a fundamental role in this paper.

\begin{definition} A set $\Lambda \subset G$ is called a \textbf{Meyer set} if $\Lambda$ is relatively dense and $\Lambda-\Lambda-\Lambda$ is uniformly discrete.
\end{definition}

For equivalent characterisations of Meyer sets see \cite{LAG,LAG1,Meyer,MOO,NS11}. Of importance to us will be the following result.

\begin{theorem}\label{thm: meyer CPS}\cite{NS11} Let $\Lambda \subseteq G$ be relatively dense. Then $\Lambda$ is Meyer if and only if it is a subset of a (weak) model set.

Moreover, if $\Lambda$ is Meyer, it is a subset of a weak model set in a CPS $(G,H, \cL)$ with metrisable and compactly generated $H$.
\end{theorem}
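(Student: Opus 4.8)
I would prove the two implications separately, the forward one being the substantial direction (and the one that yields the refinement on $H$).

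The backward implication is routine. Suppose $\Lambda\subseteq\oplam(W)$ for a compact $W\subseteq H$ in some CPS $(G,H,\cL)$. The first thing I would record is that a weak model set has uniformly discrete threefold difference set: if $K\subseteq H$ is compact then $\oplam(K)=\pi_{G}\bigl(\cL\cap(G\times K)\bigr)$ is uniformly discrete, since distinct points $x_n,y_n\in\oplam(K)$ with $x_n-y_n\to0$ would force $\bigl(x_n-y_n,(x_n-y_n)^{\star}\bigr)$ into $\cL\cap\bigl(V\times(K-K)\bigr)$ for all large $n$ and a fixed compact $V\ni0$, a finite set because $\cL$ is a lattice, and a finite set clustering at $0$ is eventually $\{0\}$. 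Since the $\star$-map is a homomorphism, $\Lambda-\Lambda-\Lambda\subseteq\oplam(W)-\oplam(W)-\oplam(W)\subseteq\oplam(W-W-W)$ with $W-W-W$ compact, so $\Lambda-\Lambda-\Lambda$ is uniformly discrete; together with the standing hypothesis that $\Lambda$ is relatively dense, this is exactly the statement that $\Lambda$ is Meyer.

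For the forward implication, let $\Lambda$ be Meyer. Translating if necessary, I would assume $0\in\Lambda$; then $\Lambda\subseteq\Lambda-\Lambda\subseteq\Lambda-\Lambda-\Lambda$ is uniformly discrete, hence countable (as $G$ is $\sigma$-compact), and $\Gamma:=\langle\Lambda\rangle=\langle\Lambda-\Lambda\rangle$ is a countable subgroup of $G$. For $\eps>0$ put $D_{\eps}:=\{\chi\in\widehat{G}:|\chi(\lambda)-1|\le\eps\text{ for all }\lambda\in\Lambda\}$. The analytic heart of the proof — and the step I expect to be the main obstacle — is the \emph{harmonious property} of $\Lambda$: each $D_{\eps}$ with $0<\eps<1$ is relatively dense in $\widehat{G}$. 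This is Meyer's theorem, and I would establish it either via his original pigeonhole argument inside a large fundamental domain, or via the almost-lattice characterisation $\Lambda-\Lambda\subseteq\Lambda+F$ ($F$ finite) and harmonic analysis on the subgroup $\langle\Lambda\rangle$.

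Granting the harmonious property, I would assemble the CPS as follows. Topologise $\Gamma$ by taking $\{x\in\Gamma:|\chi(x)-1|\le\tfrac1n\text{ for all }\chi\in D_{1/n}\}$, $n\in\NN$, as a neighbourhood base at $0$; this is a first countable group topology $\tau$, and I let $H$ be its Hausdorff completion and $\iota\colon\Gamma\to H$ the canonical dense homomorphism. Because $|\chi(\lambda)-\chi(\lambda')|\le|\chi(\lambda)-1|+|\chi(\lambda')-1|\le2\eps$ for all $\lambda,\lambda'\in\Lambda$ and $\chi\in D_{\eps}$, the set $\Lambda-\Lambda$ lies inside a single basic $\tau$-neighbourhood of $0$, so $\overline{\iota(\Lambda)}$ is compact; then $H$ is a metrisable LCAG (by first countability) and is compactly generated, equal to $\overline{\langle\overline{\iota(\Lambda-\Lambda)}\rangle}$ with $\overline{\iota(\Lambda-\Lambda)}$ compact and $\langle\iota(\Lambda-\Lambda)\rangle=\iota(\Gamma)$ dense. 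The map $x\mapsto(x,\iota(x))$ realises $\Gamma$ as a subgroup $\cL\subseteq G\times H$ with $\pi_{G}|_{\cL}$ injective and $\pi_{H}(\cL)=\iota(\Gamma)$ dense; it is discrete, for if $x\in\Gamma$ is small in $G$ with $\iota(x)$ small in $H$ then, writing an arbitrary $\chi\in\widehat{G}$ as $\chi_{0}+\kappa$ with $\chi_{0}\in D_{\eps}$ (relative density) and $\kappa$ ranging over a fixed compact set, one sees $\sup_{\chi\in\widehat{G}}|\chi(x)-1|$ is as small as we please, hence $x=0$; and it is cocompact because $\Lambda$, and so $\Gamma$, is relatively dense in $G$ while $\overline{\iota(\Lambda)}$ is a compact neighbourhood in $H$. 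Thus $(G,H,\cL)$ is a CPS with $H$ metrisable and compactly generated, and $\Lambda\subseteq\oplam\bigl(\overline{\iota(\Lambda)}\bigr)$ — a weak model set — which completes the argument.
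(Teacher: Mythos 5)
First, note that the paper itself does not prove this theorem: it is quoted from \cite{NS11}, so your sketch can only be measured against the literature it cites, and it does follow the same general strategy (the backward direction via $\oplam(W)-\oplam(W)-\oplam(W)\subseteq\oplam(W-W-W)$, the forward direction via $\eps$-dual characters and a completion of $\langle\Lambda\rangle$). The backward implication as you wrote it is correct and complete.

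The forward implication, however, has genuine gaps beyond the one you flag. (i) You explicitly defer the relative denseness of $D_{\eps}$ ("Meyer's theorem"), but in the stated generality --- $\sigma$-compact LCAG $G$, with the refinement that $H$ can be taken metrisable and compactly generated --- this is exactly the analytic content of the cited result; Meyer's pigeonhole argument is an $\RR^d$ argument, and extending it is not routine, so the heart of the proof is missing rather than sketched. (ii) Even granting harmoniousness, the construction of $H$ does not work as written. The proposed base $U_n=\{x\in\Gamma:|\chi(x)-1|\le 1/n\ \forall\chi\in D_{1/n}\}$ shrinks the character set and the tolerance simultaneously, so the $U_n$ are not nested and the group-topology axiom (for each $n$ some $m$ with $U_m+U_m\subseteq U_n$) is unverified and in fact unclear; the standard device is to fix one $\eps_0$-dual set and vary only the tolerance. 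More seriously, the step ``$\Lambda-\Lambda$ lies inside a single basic neighbourhood, so $\overline{\iota(\Lambda)}$ is compact, then $H$ is a metrisable LCAG'' conflates boundedness with total boundedness: nothing in your argument shows the basic neighbourhoods are precompact, i.e.\ that the Hausdorff completion is \emph{locally} compact, and this is precisely the crux of the model-set construction (in the literature it is extracted from the relative denseness of the $\eps$-duals by a nontrivial duality or structure-theorem argument). Without local compactness of $H$ and compactness of the window, the triple $(G,H,\cL)$ is not a CPS and the cocompactness and discreteness arguments have nothing to stand on. Finally, a smaller point: after translating so that $0\in\Lambda$ you produce a CPS containing $\Lambda-\lambda_0$, and passing back to $\Lambda$ itself needs a further argument (a translate of a weak model set is not a weak model set in the same CPS unless $\lambda_0\in L$); working with $\Gamma=\langle\Lambda\rangle$ and the $\eps$-duals of $\Lambda$ itself avoids this.
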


\smallskip

Given a CPS $(G,H, \cL)$, the map
\begin{equation}\label{eq77}
L \ni  x \to (x,x^\star) \in \cL \,.
\end{equation}
is a group isomorphism, and hence it induces an isomorphism between the spaces of (bounded) functions on $L$ and $\cL$, respectively. Since $\cL$ is a discrete group, the space of (translation) bounded measures on $\cL$ can be identified with the space of (bounded) functions on $\cL$. On another hand $L$ is typically dense in $G$, and many functions on $L$ do not induce pure point measures.

For us, of interest will be measures supported inside weak model sets $\oplam(W)$. Since $\oplam(W)$ is uniformly discrete \cite{MOO}, the space of (translation) bounded measures on $\oplam(W)$ can be identified with the space of (bounded) functions on, and corresponds via the above isomorphism with the spaces of (translation) bounded measures or (bounded) functions on $\cL$, respectively, that are supported inside $G \times W$.

Our focus in this paper is on these two spaces. We will study them as spaces of measures, and we will be interested in the relation between the Fourier theory of these two spaces, and the behaviour of the Fourier transform
with respect to the isomorphism induced by \eqref{eq77}. For this reason, let us introduce the following notations.

Given a CPS $(G,H, \cL)$ and a compact set $W$ we denote by
\begin{align*}
\cM^\infty(\oplam(W)) &:= \{ \mu \in \cM^\infty(G) : \supp(\mu) \subseteq \oplam(W)\}  \\
\cM^\infty(\cL; W) &:=\{ \nu \in \cM^\infty(G \times H) : \supp(\nu) \subseteq \left( \cL \cap (G \times W) \right) \}  \,.
\end{align*}
The isomorphism \eqref{eq77} induces a bijection $\LI_{G,H,\cL, W}: \cM^\infty(\oplam(W)) \to   \cM^\infty(\cL ; W)$
$$
\LI_{G,H,\cL, W}(\mu)= \sum_{(x,x^\star) \in \cL} \mu(\{x \}) \delta_{(x,x^\star)} \,,
$$
with inverse $\DE_{G,H,\cL, W}: \cM^\infty(\cL ;W) \to \cM^\infty(\oplam(W))$
$$
\DE_{G,H,\cL, W}(\nu)= \sum_{(x,x^\star) \in \cL} \nu(\{(x,x^\star) \}) \delta_{x} \,.
$$
We will refer to these mappings as the \textbf{lift operator} and the \textbf{descent operator }, respectively.

The main results in this paper are that these operators are bijections between the subspaces of Fourier transformable (or cones of positive definite) measures, and relate their Fourier transform.

To understand the connection between the Fourier transforms, let us recall the notion of dual CPS. Given a CPS $(G, H, \cL)$, we can define
\begin{displaymath}
\cL^0 := \{ (\chi, \psi) \in \widehat{G} \times \widehat{H} : \chi(x)\psi(x^\star) =1 \, \forall x \in L \} \,.
\end{displaymath}
Then, $(\widehat{G}, \widehat{H}, \cL^0)$ is a CPS \cite{BHS,MOO,Moody,NS11}. We will refer to this as the CPS \textbf{dual to $(G, H, \cL)$}.

\section{Positive definite measures with Meyer set support}

In this section, we show that $\LI_{G,H,\cL, W}$ and $\DE_{G,H,\cL, W}$ take positive definite measures to positive definite measures.

\smallskip
Let us start with the following obvious Lemma, which follows immediately from Def.~\ref{def PD funct} and the fact that the function from \eqref{eq77} is a group isomorphism.

\begin{lemma}\label{L1} Let $(G, H, \cL)$ be a CPS and let $f : L \to \CC$ be a function. Define $g :\cL \to \CC$ via
$$
g(x,x^\star):= f(x) \,.
$$
Then $f$ is positive definite on $L$ if and only if $g$ is positive definite on $\cL$. \qed
\end{lemma}

\medskip
Next, we prove a slight generalisation of \cite[Lemma~2.10]{LS1} and \cite[Lemma~8.4]{TAO}.

\begin{lemma}\label{restriction is pd} Let $\gamma$ be a positive definite pure point measure on $G$, and let $L$ be any subgroup of $G$. Then, the function $g : L \to \CC$ defined via
\begin{displaymath}
g(x):= \gamma ( \{ x \}) \,,
\end{displaymath}
is a positive definite function on $L$.
\end{lemma}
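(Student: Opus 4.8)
The plan is to recover the point masses $\gamma(\{x\})$, $x\in L$, as limits of values of the positive definite functions $\gamma*\varphi*\widetilde{\varphi}$, $\varphi\in\Cc(G)$, and to read positive definiteness of $g$ off from that of these functions. Fix $n\in\NN$, points $x_1,\dots,x_n\in L$ and scalars $c_1,\dots,c_n\in\CC$; since $L$ is a subgroup, $g(x_k-x_l)$ makes sense, and by Definition~\ref{def PD funct} it suffices to prove $Q:=\sum_{k,l=1}^{n}\overline{c_l}\,c_k\,g(x_k-x_l)\ge 0$. I would begin by recording that, $\gamma$ being a pure point (Radon) measure, we may write $\gamma=\sum_{y\in S}a_y\delta_y$ for a countable $S\subseteq G$ with $\sum_{y\in S\cap K}|a_y|<\infty$ for every compact $K$, where $a_y=\gamma(\{y\})$ (so $a_y=g(y)$ for $y\in L$ and $a_y=0$ otherwise).

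Next I would choose a decreasing sequence $(U_i)$ of symmetric precompact neighbourhoods of $0$ in $G$ with $U_i-U_i\to\{0\}$, together with $\varphi_i\in\Cc(G)$ satisfying $\varphi_i\ge 0$, $\supp(\varphi_i)\subseteq U_i$ and $\int_G\varphi_i=1$ (available since $G$ is a second countable LCAG). Put $h_i:=\varphi_i*\widetilde{\varphi_i}$; then $h_i\ge 0$, $\supp(h_i)\subseteq U_i-U_i$, and Cauchy--Schwarz gives $h_i(x)\le h_i(0)=\|\varphi_i\|_2^{2}=:M_i$ for all $x$. Because $\gamma$ is positive definite, $f_i:=\gamma*\varphi_i*\widetilde{\varphi_i}$ is a positive definite function on $G$, with $f_i(z)=\sum_{y\in S}a_y\,h_i(z-y)$; feeding the points $x_1,\dots,x_n$ and scalars $c_1,\dots,c_n$ into Definition~\ref{def PD funct} therefore yields
\[
0\ \le\ \sum_{k,l=1}^{n}\overline{c_l}\,c_k\,f_i(x_k-x_l)
\ =\ \sum_{k,l=1}^{n}\overline{c_l}\,c_k\sum_{y\in S}a_y\,h_i\big((x_k-x_l)-y\big).
\]

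I would then split the inner sum into the diagonal contribution $y=x_k-x_l$, which adds up to $M_i\,Q$, and a remainder $E_i$ collecting all $y\in S$ with $y\ne x_k-x_l$. Using $0\le h_i\le M_i$ and $\supp(h_i)\subseteq U_i-U_i$ one estimates $|E_i|\le\big(\sum_{k}|c_k|\big)^{2}M_i\,\eps_i$, where $\eps_i$ is the largest, over the finitely many values $x_k-x_l$, of the sum of $|a_y|$ over those $y\in S$ lying in $(x_k-x_l)+(U_i-U_i)$ but different from $x_k-x_l$. Since $(|a_y|)_{y\in S}$ is summable over each compact set and $U_i-U_i\to\{0\}$, a tail-of-a-convergent-series argument gives $\eps_i\to 0$. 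Dividing the displayed inequality by $M_i>0$ now gives $0\le Q+E_i/M_i$ with $E_i/M_i\to 0$, hence $Q\ge 0$, which is the assertion.

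The one point that needs care — and where this goes beyond the lattice statements \cite[Lemma~8.4]{TAO} and \cite[Lemma~2.10]{LS1} — is that $\supp(\gamma)$ need not be uniformly discrete, so the off-diagonal points of $S$ lying close to some $x_k-x_l$ cannot simply be dropped; they are instead absorbed into $E_i$, which is negligible next to the main term $M_iQ$ precisely because $M_i\to\infty$ while $\eps_i\to 0$. Apart from that, the argument is routine once the conventions for $\widetilde{\varphi}$ and for the convolution $\gamma*(\varphi*\widetilde{\varphi})$ are fixed.
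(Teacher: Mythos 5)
Your proof is correct, but it takes a genuinely different route from the paper. The paper's own argument is essentially two lines: it invokes \cite[Prop.~2.4]{LS1}, which already states that for a positive definite pure point measure $\gamma$ the function $f(x):=\gamma(\{x\})$ is positive definite on all of $G$, and then observes that the restriction of a positive definite function to the subgroup $L$ is positive definite directly from Def.~\ref{def PD funct}. You instead reprove that fact from scratch: you test the positive definite functions $\gamma*\varphi_i*\widetilde{\varphi_i}$ at the points $x_k$, let the approximate identity concentrate, and separate the diagonal contribution $M_iQ$ from an error $E_i$ controlled by the local summability of the atoms of $\gamma$. Note that your argument never uses $x_k\in L$ except to identify $a_{x_k-x_l}$ with $g(x_k-x_l)$, so what you have actually established is the statement on all of $G$ (i.e.\ the content of \cite[Prop.~2.4]{LS1}) followed by restriction -- structurally the same reduction as the paper, but with the key external input proved rather than cited. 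The paper's version buys brevity; yours buys self-containedness and makes explicit why no uniform discreteness of $\supp(\gamma)$ is needed, only local finiteness of $|\gamma|$. Two small remarks: the closing comment that the error is negligible ``because $M_i\to\infty$'' is neither needed nor always true (for discrete $G$ the $M_i$ can stay bounded); what carries the proof is solely the bound $|E_i|\le\bigl(\sum_k|c_k|\bigr)^2M_i\eps_i$ together with $\eps_i\to0$, exactly as your displayed division shows. Also, the tail argument for $\eps_i\to0$ tacitly uses that one can choose the $U_i$ with $\bigcap_i(U_i-U_i)=\{0\}$, which is fine here since $G$ is second countable (hence metrizable) and Hausdorff, but is worth saying.
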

\begin{proof}
Define $f : G \to \CC$ via
\begin{displaymath}
f(x):= \gamma ( \{ x \}) \,.
\end{displaymath}
Then $f$ is a positive definite function on $G$ \cite[Prop~2.4]{LS1}. Then, Def.~\ref{def PD funct} immediately gives that the restriction $g=f|_L$ to the subgroup $L$ is a positive definite function on $L$.
\end{proof}

We will also need the following result.

\begin{lemma}\label{extension is pd} Let $G$ be any group and let $H \leq G$ be a subgroup. Let $f : H \to \CC$ be a positive definite functions. Then, the function $g : G \to \CC$ defined via
\begin{displaymath}
g(x):= \left\{
\begin{array}{l c}
f(x) & \mbox{ if } x \in H \\
0 & \mbox{ otherwise }
\end{array}
 \right.
\end{displaymath}
is positive definite on $G$.
\end{lemma}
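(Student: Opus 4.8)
The plan is to verify positive definiteness of $g$ directly from the quadratic-form characterisation in Def.~\ref{def PD funct}, using only that $g$ is supported on $H$ and that $H$ is a subgroup of $G$. Fix $n \in \NN$, points $x_1, \dots, x_n \in G$ and scalars $c_1, \dots, c_n \in \CC$; the goal is to show that $\sum_{k,l=1}^n \overline{c_l}\, g(x_k - x_l)\, c_k \geq 0$.

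The key observation is that $g(x_k - x_l) \neq 0$ forces $x_k - x_l \in H$, i.e. $x_k$ and $x_l$ lie in the same coset of $H$ in $G$. So I would partition the index set $\{1, \dots, n\}$ into classes $I_1, \dots, I_m$ according to which coset of $H$ contains $x_j$, and note that every term of the double sum with $k$ and $l$ in different classes vanishes. Hence
$$
\sum_{k,l=1}^n \overline{c_l}\, g(x_k - x_l)\, c_k \;=\; \sum_{r=1}^m \ \sum_{k,l \in I_r} \overline{c_l}\, f(x_k - x_l)\, c_k \,,
$$
where inside the inner sums $g$ may be replaced by $f$ precisely because $x_k - x_l \in H$ whenever $k,l \in I_r$.

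It then remains to see that each inner sum is non-negative. For this I would, for each $r$, choose a representative and write $x_j = a_r + h_j$ with $h_j \in H$ for every $j \in I_r$; then $x_k - x_l = h_k - h_l \in H$, so the $r$-th inner sum equals $\sum_{k,l \in I_r} \overline{c_l}\, f(h_k - h_l)\, c_k$, which is $\geq 0$ by positive definiteness of $f$ on $H$ applied to the finitely many points $\{h_j : j \in I_r\} \subseteq H$ with the corresponding coefficients $\{c_j : j \in I_r\}$. Summing over $r$ yields the claim. There is no genuine obstacle here; the only step requiring a little care is the bookkeeping that cross-coset terms drop out, which is immediate once one recalls that, $H$ being a subgroup, "lying in the same coset" is an equivalence relation and $x_k-x_l\in H$ depends only on the coset classes of $x_k$ and $x_l$.
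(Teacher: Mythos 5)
Your proof is correct and follows essentially the same route as the paper: partition the points into cosets of $H$, observe that cross-coset terms vanish since $g$ is supported on $H$, and reduce each within-coset block to the positive definiteness of $f$ by translating to points of $H$ (the paper subtracts one of the points of the class, you subtract a coset representative — the same device). No gaps.
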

\begin{proof}
Let $n \in \NN, x_1,\ldots, x_n\in G$ and $c_1,..,c_n \in \CC$. Note that $g(x_k-x_l) =0$ whenever $x_k -x_l \notin H$.

On $G$ define the standard equivalence $\pmod{H}$ as
\begin{displaymath}
x \equiv y \pmod{H} \Leftrightarrow x-y \in H \,.
\end{displaymath}
This induces an equivalence relation on the set $\{ x_1,..., x_n \}$, and hence we can partition this set in equivalence classes $F_1,..., F_m$.

To make the computation clearer, define $c : G\to \CC$
$$
c(x):=
\left\{
\begin{array}{cc}
  c_j & \mbox{ if } x=x_j \\
  0 & \mbox{ otherwise }
\end{array}
\right.\,.
$$
Then,
\begin{align*}
\sum_{k,l=1}^n g(x_k-x_l)c_k \overline{c_l} &=\sum_{k,l=1}^n g(x_k-x_l)c(x_k) \overline{c(x_l)} \\
&=
 \sum_{i=1}^m \sum_{j=1}^m \left( \sum_{x \in F_i} \sum_{y \in F_j}  g(x-y) c(x) \overline{c(y)} \right)\\
&= \sum_{i=1}^m  \left( \sum_{x,y \in F_i}  g(x-y)c(x)\overline{c(y)} \right) \,.
\end{align*}
Now, fix some $1 \leq i \leq m$, and let $F_i:=\{ z_1,.., z_q \}$. Then
\begin{align*}
\sum_{x,y \in F_i}  g(x-y)c(x)\overline{c(y)}&= \sum_{r,s=0}^q g(z_r-z_s)c(z_r)\overline{c(z_s)} \\
&= \sum_{r,s=1}^q g(z_r-z_s)c(z_r)\overline{c(z_s)} \\
&= \sum_{r,s=1}^q f\left((z_r-z_1) -(z_s-z_1)\right)c(z_r)\overline{c(z_s)}  \geq 0 \\
\end{align*}
by the positive definiteness of $f$ applied to $m; y_1:=z_1 - z_1 ; y_2:=z_2 - z_1 ;...; y_q:=z_q - z_1 \in H$ and $c'_1=c(z_1),...,c'_q=c(z_q)$.

Therefore, for each $i$ we have $\sum_{x,y \in C_i}  g(x-y)c(x)\overline{c(y)} \geq 0$, and hence
\begin{displaymath}
\sum_{k,l=1}^n g(x_k-x_l)c_k \overline{c_l}=\sum_{i=1}^m  \left( \sum_{x,y \in C_i}  g(x-y)c(x)\overline{c(y)} \right)  \geq 0 \,.
\end{displaymath}
\end{proof}

\begin{remark} One can also prove Lemma~\ref{extension is pd} by using Fourier analysis. Indeed, since $f$ is positive definite, the measure $\mu:=f \theta_{H_{\mathsf{d}}}$ is a positive definite measure on the discrete group $H_{\mathsf{d}}$ \cite[Cor.~4.3]{ARMA1}. Then, it is Fourier transformable on $H_{\mathsf{d}}$ and its Fourier transform is positive \cite{ARMA1,BF}. As $H_{\mathsf{d}}$ is closed in the discrete group $G_{\mathsf{d}}$, by \cite[Thm.~4.2]{ARMA1}, the measure $\nu:=g \theta_{G_{\mathsf{d}}}$ is Fourier transformable on $G_{\mathsf{d}}$ and has positive Fourier transform. Then, $\mu$ is positive definite \cite[Thm.~4.1]{ARMA1}. Therefore, by \cite[Prop.~2.4]{LS1}, $g$ is positive definite on $G$.
\end{remark}

We are now ready to prove the following result.

\begin{proposition}\label{prop pd can be lifted} Let $(G,H, \cL)$ be a CPS, $\oplam(W)$ be a weak model set, and $f : G \to \CC$ be a function which vanishes outside $\oplam(W)$.
Let
\begin{align*}
\gamma &=\sum_{x \in \oplam(W)} f(x) \delta_x \\
\eta&= \sum_{(x,x^\star) \in \cL}  f(x) \delta_{(x,x^\star)}= \LI_{G, H, L, W} (\gamma) \,.
\end{align*}
Then $\gamma$ is a positive definite measure on $G$ if and only if $\eta= \LI_{G,H,\cL, W}(\gamma)$ is a positive definite measure on $G \times H$.
\end{proposition}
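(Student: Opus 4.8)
The plan is to use the structural results about positive definite pure point measures together with Lemmas~\ref{L1}, \ref{restriction is pd} and \ref{extension is pd} to reduce the statement to a purely function-theoretic claim on the groups $L$ and $\cL$. Recall that by \cite[Prop.~2.4]{LS1} a pure point measure $\sum_{x} c_x \delta_x$ on a group is positive definite if and only if the coefficient function $x \mapsto c_x$ is a positive definite function on that group (when the group is given the discrete topology, and after restricting attention to where the coefficients live). So the heart of the matter is to compare positive definiteness of the coefficient function of $\gamma$ with that of $\eta$.

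First I would make precise the translation. Since $f$ vanishes outside $\oplam(W) \subseteq L$, both $\gamma$ and $\eta$ are pure point measures, and $\eta$ is translation bounded exactly when $\gamma$ is (this is the elementary bijection mentioned in the introduction, using that $\oplam(W)$ is uniformly discrete). Define $\phi : L \to \CC$ by $\phi(x) := f(x)$ for $x \in L$ (so $\phi$ is supported on $\oplam(W)$), and define $\psi : \cL \to \CC$ by $\psi(x,x^\star) := f(x) = \phi(x)$. By Lemma~\ref{L1}, $\phi$ is positive definite on $L$ if and only if $\psi$ is positive definite on $\cL$. So it suffices to show: $\gamma$ is positive definite on $G$ $\iff$ $\phi$ is positive definite on $L$, and $\eta$ is positive definite on $G \times H$ $\iff$ $\psi$ is positive definite on $\cL$; then chain the equivalences through Lemma~\ref{L1}.

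For each of these two equivalences the argument is the same, so I describe it for $\gamma$. For the forward direction: if $\gamma$ is positive definite on $G$, then by Lemma~\ref{restriction is pd} (applied with the subgroup $L \leq G$) the function $x \mapsto \gamma(\{x\}) = \phi(x)$ is positive definite on $L$. For the converse: if $\phi$ is positive definite on $L$, extend it by zero to $\Gd$ (the group $G$ with the discrete topology); by Lemma~\ref{extension is pd}, since $L$ is a subgroup of $\Gd$, the extension $\tilde\phi$ is positive definite on $\Gd$. Now $\tilde\phi = f$ as a function on $G$, and by \cite[Cor.~4.3]{ARMA1} (as used in the Remark after Lemma~\ref{extension is pd}) the measure $f\,\theta_{\Gd}$ on $\Gd$ is positive definite, hence Fourier transformable with positive Fourier transform; descending to $G$ via the standard identification this says $\gamma$ is positive definite on $G$. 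The same argument verbatim, with $(G,L)$ replaced by $(G\times H, \cL)$, handles the equivalence for $\eta$. Putting the three equivalences together yields the proposition.

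The step I expect to require the most care is the passage between positive definiteness of a discrete coefficient function on a subgroup and positive definiteness of the associated pure point measure on the ambient (non-discrete) group — i.e.\ making sure the cited results \cite[Prop.~2.4]{LS1} and \cite[Cor.~4.3]{ARMA1} apply with the correct topologies, and that translation boundedness is preserved so that $\gamma$ and $\eta$ are genuine translation bounded measures throughout. The group-theoretic lemmas \ref{L1}, \ref{restriction is pd}, \ref{extension is pd} are purely formal once this bridge is in place, so the real content is bookkeeping about discrete versus ambient topology; the actual inequality $\sum_{k,l} g(x_k - x_l) c_k \overline{c_l} \geq 0$ never has to be re-derived here.
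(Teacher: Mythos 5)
Your proposal follows essentially the same route as the paper: the content is the three lemmas (Lemma~\ref{restriction is pd}, Lemma~\ref{L1}, Lemma~\ref{extension is pd}) plus the measure--function bridge \cite[Prop.~2.4]{LS1}, and your reorganization into three equivalences chained through Lemma~\ref{L1} is only cosmetically different from the paper's two implication chains.

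The one point where you deviate is also the only step that does not work as written: the converse half of the measure--function equivalence. After obtaining that the zero-extension of $\phi$, i.e.\ the function $f$, is positive definite on $\Gd$, you pass to the measure $f\,\theta_{\Gd}$, invoke \cite[Cor.~4.3]{ARMA1} and the positivity of its Fourier transform, and then assert that ``descending to $G$ via the standard identification'' gives positive definiteness of $\gamma$ on $G$. That descent is not a formality: positive definiteness of $f\,\theta_{\Gd}$ is tested against $\varphi*\widetilde{\varphi}$ with $\varphi$ finitely supported on $\Gd$, whereas positive definiteness of $\gamma$ requires $\varphi\in\Cc(G)$, so identifying the two point measures does not by itself transfer the property --- bridging exactly this gap is the role of \cite[Prop.~2.4]{LS1}, and the paper's Remark after Lemma~\ref{extension is pd} uses the $\Gd$ detour only to reprove the \emph{function}-level Lemma, not to conclude anything about measures on $G$. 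The fix is immediate and is what the paper does: positive definiteness of a function in the sense of Definition~\ref{def PD funct} involves no topology, so Lemma~\ref{extension is pd} already gives that $f$ is positive definite on $G$ itself; it is then bounded (by $f(0)$), so $\gamma$ is a translation bounded pure point measure since $\oplam(W)$ is uniformly discrete, and the equivalence \cite[Prop.~2.4]{LS1} that you stated at the outset yields directly that $\gamma$ (respectively $\eta$, in the analogous step for $\cL\subseteq G\times H$) is positive definite. With that one-line replacement your argument coincides with the paper's.
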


\begin{proof}

$\Rightarrow$: Denote as usual $L := \pi_G(\cL)$. Define $g: L \to \CC$ via
\begin{displaymath}
g(x):= \gamma ( \{ x \}) \,,
\end{displaymath}
that is $g= f|_{L}$.

Then, by Lemma~\ref{restriction is pd}, $g$ is a positive definite function on $L$ and hence, by Lemma~\ref{L1}, the function $h: \cL \to \CC$
\begin{displaymath}
h(x,x^\star)=g(x)
\end{displaymath}
is a positive definite function on $\cL$. Therefore, by Lemma~\ref{extension is pd}, the function $j: G \times H \to \CC$
\begin{displaymath}
j(z):= \left\{
\begin{array}{l c}
h(z) & \mbox{ if } z \in \cL \\
0 & \mbox{ otherwise }
\end{array}
 \right.
\end{displaymath}
is positive definite on $G \times H$. The claim follows from \cite[Prop.~2.4]{LS1}.

$\Leftarrow:$ Since $\eta$ is positive definite, by Lemma~\ref{restriction is pd} the function $h: \cL \to \CC$ defined by
\begin{displaymath}
h(x,x^\star):= \eta(\{(x,x^\star) \}) = f(x) \,,
\end{displaymath}
is positive definite on $\cL$ and hence, by Lemma~\ref{L1}, the restriction $g=f|_L$ is positive definite on $L$. As $f$ is zero outside $\oplam(W) \subseteq L$, it follows from Lemma~\ref{extension is pd} that $f$ is a positive definite function on $G$. The claim follows now from \cite[Prop.~2.4]{LS1}.
\end{proof}

\begin{remark}
\begin{itemize}
  \item[(a)] In Proposition~\ref{prop pd can be lifted}, the positive definiteness of $\eta$ and $\gamma$ is equivalent to the positive definiteness of the function $f$.
  \item[(b)] Denoting by
  \begin{align*}
\mathcal{PD}(\oplam(W)) &:= \{ \mu \in \cM^\infty(\oplam(W)) : \mu \mbox{ is positive definite } \}  \\
\mathcal{PD}(\cL; W) &:=\{ \nu \in \cM^\infty(\cL, W) : \nu \mbox{ is positive definite } \}  \,,
\end{align*}
Prop.~\ref{prop pd can be lifted} says that
\begin{align*}
  \LI_{G,H,\cL, W}(\mu)( \mathcal{PD}(\oplam(W)) ) &=\mathcal{PD}(\cL; W)  \\
  \DE_{G,H,\cL, W}(\mu) (\mathcal{PD}(\cL; W)  )&= \mathcal{PD}(\oplam(W))  \,.
\end{align*}
\end{itemize}

\end{remark}

\smallskip

\section{The lift of Fourier transformable measures}

We can now prove that, given a CPS $(G, H,\cL)$ and a compact set $K$, the lifting operator induces a bijection between the space of Fourier transformable measures supported inside $\oplam(W)$ and the space of Fourier transformable measures supported inside $ \cL \cap (G \times W)$.

\begin{theorem}\label{T1} Let $(G,H, \cL)$ be a CPS and let $W \subseteq H$ be compact. Let $\gamma$ be a translation bounded measure supported inside $\oplam(W)$ and let
$$
\eta:= \LI_{G,H,\cL, W}(\gamma) \,.
$$
Then $\gamma$ is Fourier transformable if and only if $\eta$ is Fourier transformable.

Moreover, if $\varphi \in K_2(H)$ is any function so that $\varphi \equiv 1$ on $W$, then for all $\psi \in \Cc(\widehat{G})$ we have $\psi \otimes \hat{\varphi} \in L^1(\eta)$ and
\begin{equation}\label{eq6}
\widehat{\gamma}(\psi)= \widehat{\eta}( \psi \otimes \check{\varphi})=: (\widehat{\eta})_{\check{\varphi}}(\psi) \,.
\end{equation}
\end{theorem}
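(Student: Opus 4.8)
The plan is to reduce the general Fourier-transformability equivalence to the positive-definite case already settled in Proposition~\ref{prop pd can be lifted}, using the fact (from Theorem~\ref{thm: meyer CPS} and standard structure theory) that any Fourier transformable measure supported inside a weak model set can be written as a finite linear combination of positive definite measures supported inside a common weak model set. Concretely, I would first observe that both $\LI_{G,H,\cL,W}$ and $\DE_{G,H,\cL,W}$ are linear, and that if $\gamma$ vanishes outside $\oplam(W)$ with $W$ compact, then by enlarging $W$ to a slightly larger compact set $W'$ (say with $W\subseteq \mathrm{int}(W')$) one can realize any measure in the span as supported inside $\oplam(W')$; since $\oplam(W)\subseteq\oplam(W')$ and $\cL\cap(G\times W)\subseteq\cL\cap(G\times W')$, and the lift operators are compatible with enlarging the window, it suffices to prove the statement for $W$ replaced by $W'$. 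Then write $\gamma=\sum_{i=1}^k c_i\gamma_i$ with each $\gamma_i$ positive definite and supported inside $\oplam(W')$; by Proposition~\ref{prop pd can be lifted} each $\eta_i:=\LI(\gamma_i)$ is positive definite (hence Fourier transformable, with positive Fourier transform), and by linearity of the lift $\eta=\sum c_i\eta_i$, so $\eta$ is Fourier transformable; the converse direction is symmetric using $\DE$. This gives the equivalence.

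For the formula \eqref{eq6}, I would again reduce to the positive definite case by linearity and prove it for a single positive definite $\gamma$ supported inside $\oplam(W)$, with $\eta=\LI(\gamma)$. The key computational identity is the following: for $\psi\in\Cc(\widehat{G})$ and $\varphi\in K_2(H)$ with $\varphi\equiv 1$ on $W$, one has
\begin{displaymath}
\widehat{\eta}(\psi\otimes\widecheck{\varphi})=\sum_{(x,x^\star)\in\cL}f(x)(\psi\otimes\widecheck{\varphi})^{\vee}(x,x^\star)
=\sum_{x\in\oplam(W)}f(x)\,\widecheck{\psi}(x)\,\varphi(x^\star),
\end{displaymath}
where I have used that the inverse Fourier transform of $\psi\otimes\widecheck{\varphi}$ on $\widehat{G}\times\widehat{H}$ factors as $\widecheck{\psi}\otimes\varphi$ (after identifying $(\widehat{H})^{\widehat{\ }}$ with $H$), and that $\widehat{\eta}$ acts on test functions by the defining relation of Fourier transformability applied to $\psi\otimes\widecheck{\varphi}\in K_2(\widehat{G}\times\widehat{H})$. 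Since $x^\star\in W$ for every $x\in\oplam(W)$ and $\varphi\equiv 1$ on $W$, the factor $\varphi(x^\star)$ is identically $1$, so the sum collapses to $\sum_{x\in\oplam(W)}f(x)\widecheck{\psi}(x)=\widehat{\gamma}(\psi)$ by Fourier transformability of $\gamma$. One must also justify that $\psi\otimes\widecheck{\varphi}\in K_2(\widehat{G}\times\widehat{H})$ (or at least lies in the class of admissible test functions): this follows from $\psi\in\Cc(\widehat{G})\subseteq$ admissible and $\widecheck{\varphi}$ being the inverse transform of $\varphi\in K_2(H)$, together with the tensor-product compatibility of $K_2$ noted in Section~2; and that $\psi\otimes\widehat{\varphi}\in L^1(\eta)$, which holds because $\eta$ is translation bounded and $\psi\otimes\widehat{\varphi}$ decays suitably (here one uses that $\widecheck{\varphi}\in L^1$-type decay coming from $\varphi\in K_2(H)$).

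The main obstacle I anticipate is the careful handling of the test-function class and the interchange of the defining Fourier identity with the possibly infinite sum defining $\eta$: one needs $\psi\otimes\widecheck{\varphi}$ to be a legitimate test function for $\widehat{\eta}$ and the manipulation $\widehat{\eta}(\psi\otimes\widecheck{\varphi})=\sum_{(x,x^\star)}f(x)(\psi\otimes\widecheck{\varphi})^{\vee}(x,x^\star)$ to be valid, which requires absolute convergence — this is where translation boundedness of $\eta$ (equivalently $f$ bounded) combined with uniform discreteness of $\oplam(W)$ and integrability/decay of $\widecheck{\psi}$ does the work, but it must be spelled out. A secondary subtlety is ensuring the existence of $\varphi\in K_2(H)$ with $\varphi\equiv 1$ on the compact set $W$: this is standard (convolve the indicator of a neighbourhood of $W$ with a suitable bump, all within $\Cc(H)$, to land in $K_2(H)$), but should be cited or noted. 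Once these points are dispatched, linearity propagates the formula from positive definite measures to the general Fourier transformable case, completing the proof.
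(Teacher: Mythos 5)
Your forward direction ($\gamma$ transformable $\Rightarrow$ $\eta$ transformable) is exactly the paper's argument: decompose $\gamma$ via \cite[Lemma~8.3]{NS21} into four positive definite measures supported inside a common weak model set, lift them, and invoke Prop.~\ref{prop pd can be lifted}. The rest of your proposal, however, has a genuine gap. Your key computational identity
$\widehat{\eta}(\psi\otimes\widecheck{\varphi})=\sum_{(x,x^\star)\in\cL}f(x)\,(\psi\otimes\widecheck{\varphi})^{\vee}(x,x^\star)$
does not follow from the definition of Fourier transformability: the definition only gives $\eta(\Phi)=\widehat{\eta}(\widecheck{\Phi})$ for $\Phi\in K_2(G\times H)$, i.e.\ it lets you evaluate $\widehat{\eta}$ on functions of the form $\widecheck{\Phi}$, not on an arbitrary $\psi\otimes\widecheck{\varphi}$ with $\psi\in \Cc(\widehat{G})$. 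What you are asserting is a dual (inversion/Poisson-summation-type) relation $\widehat{\eta}(\Psi)=\eta(\widecheck{\Psi})$, which is not part of the definition, is not proved anywhere in your argument, and in general fails for $\psi\in\Cc(\widehat{G})$: $\widecheck{\psi}$ is only in $\Cz(G)$, so the lattice sum $\sum_{x\in\oplam(W)}f(x)\widecheck{\psi}(x)$ need not converge absolutely (translation boundedness of $\eta$ plus uniform discreteness of $\oplam(W)$ does not rescue this, contrary to what you suggest). The final step ``$=\widehat{\gamma}(\psi)$ by Fourier transformability of $\gamma$'' has the same defect --- transformability of $\gamma$ gives $\gamma(\phi)=\widehat{\gamma}(\widecheck{\phi})$ for $\phi\in K_2(G)$, not $\widehat{\gamma}(\psi)=\gamma(\widecheck{\psi})$ for $\psi\in\Cc(\widehat{G})$ --- and it is moreover circular in the converse direction, where the existence of $\widehat{\gamma}$ is precisely what has to be established. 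Your claim that $\psi\otimes\widecheck{\varphi}\in K_2(\widehat{G}\times\widehat{H})$ is also unjustified (and irrelevant, since $K_2$ of the dual group plays no role in the definition).

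This is exactly the difficulty the paper's proof is designed to avoid: instead of any pointwise lattice-sum identity, it first shows (Step~1) that $\psi\mapsto\widehat{\eta}(\psi\otimes\widecheck{\varphi})$ is well defined and a Radon measure on $\widehat{G}$, by dominating $|\psi\otimes\widecheck{\varphi}|$ with $|\widecheck{\chi}\otimes\widecheck{\varphi}|\in L^1(|\widehat{\eta}|)$ for a suitable $\chi\in K_2(G)$ with $\widecheck{\chi}\geq 1$ on $\supp(\psi)$; then (Step~2) it verifies the defining relation $(\widehat{\eta})_{\widecheck{\varphi}}(\widecheck{\psi})=\eta(\psi\otimes\varphi)=\gamma(\psi)$ only for $\psi\in K_2(G)$, using cutoff functions $\phi_n$ together with monotone and dominated convergence; this simultaneously proves the implication ``$\eta$ transformable $\Rightarrow$ $\gamma$ transformable'' and the formula \eqref{eq6}. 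Note also that your alternative route to the converse (``symmetric'' decomposition of $\eta$ using $\DE$) is not automatic as stated: \cite[Lemma~8.3]{NS21} applies to measures on a group supported inside a weak model set of a given CPS, so you would first have to exhibit $\cL\cap(G\times W)$ as a weak model set of a CPS over $G\times H$ and check that the positive definite pieces can be taken supported inside the lattice strip $\cL\cap(G\times K)$, so that the descent operator applies; the paper never needs this because the Step~1/Step~2 argument already covers that direction.
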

\begin{proof}
$\Longrightarrow$ By \cite[Lemma~8.3]{NS21}, there exists a compact set $W \subseteq K$ and four positive definite measures $\omega_{1},\omega_{2},\omega_{3},\omega_{4}$ supported inside $\oplam(K)$ such that
$$
\gamma=\omega_1-\omega_2+i\omega_3-i \omega_4 \,.
$$
Then, we have
\begin{align*}
\eta&= \LI_{G,H,\cL, W}(\gamma) =\LI_{G,H,\cL, K}(\gamma)=\LI_{G,H,\cL, K}(\omega_1-\omega_2+i\omega_3-i \omega_4 )\\
 &= \LI_{G,H,\cL, K}(\omega_1)-\LI_{G,H,\cL, K}(\omega_2)+i\LI_{G,H,\cL, K}(\omega_3)-i\LI_{G,H,\cL, K}(\omega_4) \,.
\end{align*}
Now, by Prop.~\ref{prop pd can be lifted}, for all $1 \leq j \leq 4$ the measure $\LI_{G,H,\cL, K}(\omega_j)$ is positive definite. Therefore, as a linear combination
of positive definite measures, $\eta$ is Fourier transformable.

\medskip
$\Longrightarrow$. Our argument is similar to \cite{CRS}. We split the argument into two steps.

\emph{Step 1:} We show that $(\widehat{\eta})_{\check{\varphi}}$ is a measure.

Let us first note that for all $\psi \in K_2(G)$ we have $\psi \otimes \varphi \in K_2(G \times H)$. Therefore, since $\eta$ is Fourier transformable, we have
\begin{equation}\label{eq2}
\left| \check{\psi} \otimes  \check{\varphi} \right| \in L^1(|\widehat{\eta}|) \,.
\end{equation}

We now show that for all $\phi \in \Cc(\widehat{G})$ we have $\left| \phi \otimes  \check{\varphi} \right| \in L^1(|\widehat{\eta}|)$ and that
$$
(\widehat{\eta})_{\check{\varphi}}(\phi):= \widehat{\eta}( \phi \otimes \check{\varphi}) \,.
$$
defines a measure.

Let $K \subseteq \widehat{G}$ be a fixed compact set. Then, there exists some $\psi \in K_2(G)$ such that $\check{\psi} \geq 1_K$ \cite{BF,MoSt}.

Now, for all $\psi \in \Cc(\widehat{G})$ with $\supp(\psi) \subseteq K$ we have
\begin{equation}\label{eq1}
|\widehat{\eta}| \left(|\phi \otimes \check{\varphi}| \right)=\int_{G \times H} \left| \phi(s)\right| \cdot  \left|  \check{\varphi}(t) \right| \dd |\widehat{\eta}|(s,t) \leq \| \psi \|_\infty  \int_{G \times H} \left| \check{\psi}(s)\right| \cdot  \left|  \widehat{\varphi}(t) \right| \dd |\widehat{\eta}|(s,t) < \infty
\end{equation}
and hence $(\widehat{\eta})_{\check{\varphi}}$ is well defined.

Moreover, for all $\psi \in \Cc(\widehat{G})$ with $\supp(\psi) \subseteq K$, it follows from \eqref{eq1} that
$$
\left| (\widehat{\eta})_{\check{\varphi}}(\psi) \right| \leq   C_K \| \psi \|_\infty
$$
where
$$
C_K:= \int_{G \times H} \left| \widehat{\psi}(s)\right| \cdot  \left|  \widehat{\varphi}(t) \right| \dd |\widehat{\eta}|(s,t)  < \infty \,.
$$
This shows that $(\widehat{\eta})_{\check{\varphi}}$ is a measure.

\medskip

\emph{Step 2:} We show that for all $\psi \in K_2(G)$ we have $|\widecheck{\psi}| \in L^1(|(\widehat{\eta})_{\check{\varphi}}|)$ and
$$
(\widehat{\eta})_{\check{\varphi}} (\widecheck{\psi}) = \gamma(\psi) \,.
$$

Let $\psi \in K_2(G)$ be arbitrary.

Since $G$ is second countable, so is $\widehat{G}$ \cite{ReiterSte}. In particular $\widehat{G}$ is $\sigma$-compact \cite{ReiterSte}. Therefore, there exists a sequence $K_n$ of compact sets with
$K_n \subseteq (K_{n+1})^\circ$ such that
$$
\widehat{G}= \bigcup_{n} K_n \,.
$$
Let $\phi_n \in \Cc(\widehat{G})$ be so that $1_{K_n} \leq \phi_n \leq 1_{K_{n+1}}$.

Then, $\phi_n \widehat{\psi} \in \Cc(\widehat{G})$ and by the definition of $(\widecheck{\eta})_{\check{\varphi}}$ we have
$$
( \widehat{\eta})_{\check{\varphi}}(\phi_n \widecheck{\psi}) = \widehat{\eta}\left( ( \phi_n \widecheck{\psi}) \otimes \widecheck{\varphi} \right) \,.
$$
Now, for all $n$ we have by \eqref{eq2}
$$
\left| ( \phi_n \widecheck{\psi}) \otimes \widecheck{\varphi}  \right| \leq \left| \widecheck{\psi}\right| \otimes  \left| \widecheck{\varphi} \right| \in L^1(|\widehat{\eta}|)
$$
Therefore, by the Dominated convergence theorem  (\cite[Thm.~3.2.51]{ReiterSte}), we have
\begin{align}
\widehat{\eta}\left( ( \widecheck{\psi}) \otimes \widecheck{\varphi} \right)  &= \lim_n  \widehat{\eta} \left(( \phi_n \widecheck{\psi}) \otimes \widecheck{\varphi} \right) = \lim_n ( \widehat{\eta})_{\check{\varphi}} \left( \phi_n \widecheck{\psi} \right)  \label{eq3}\,.
\end{align}

Next, by the monotone convergence theorem \cite{ReiterSte} we have
$$
| (\widehat{\eta})_{\check{\varphi}}| \left( |\widecheck{\psi}| \right) = \lim_n  | (\widehat{\eta})_{\check{\varphi}}| \left( |\phi_n \widecheck{\psi}| \right) \,.
$$

Note that for each $n$ we have
 \begin{align*}
 | (\widehat{\eta})_{\check{\varphi}}| \left( |\phi_n \widecheck{\psi}| \right) &= \sup \{ \left|  (\widehat{\eta})_{\check{\varphi}}( \Psi)  \right| : \Psi \in \Cc(\hat{G}) , |\Psi| \leq |\phi_n \widecheck{\psi}|  \}\\
  &=\sup \{ \left| \widehat{\eta} \left( \Psi \otimes \check{\varphi} \right) \right| : \Psi \in \Cc(\hat{G}) , |\Psi| \leq |\phi_n \widecheck{\psi}|  \} \\
  &= \sup \{ \left| \int_{\hat{G} \times \hat{H}} \Psi(x) \check{\varphi}(y) \dd  \widehat{\eta}(x,y) \ \right| : \Psi \in \Cc(\hat{G}) , |\Psi| \leq |\phi_n \widecheck{\psi}|  \}\\
  &\leq   \sup \{ \int_{\hat{G} \times \hat{H}} |\Psi(x) \check{\varphi}(y)| \dd  |\widehat{\eta}(x,y)|  : \Psi \in \Cc(\hat{G}) , |\Psi| \leq |\phi_n \widecheck{\psi}|  \}\\
   &\leq  \int_{\hat{G} \times \hat{H}} |\phi_n \widecheck{\psi}(x)   \check{\varphi}(y)| \dd  |\widehat{\eta}(x,y)| \leq  \int_{\hat{G} \times \hat{H}} |\widecheck{\psi}(x)   \check{\varphi}(y)| \dd  |\widehat{\eta}(x,y)|  \\
   &=|\widehat{\eta} | \left(| \widecheck{\psi} \otimes  \check{\varphi}| \right) \,.
 \end{align*}
Since $\widecheck{\psi} \otimes   \check{\varphi} \in L^1(|\widehat{\eta}|)$ we get
$$
|(\widehat{\eta})_{\check{\varphi}}|\left( |\widecheck{\psi}| \right) \leq |\widehat{\eta} | \left(| \widecheck{\psi} \otimes  \check{\varphi}| \right)  < \infty \,.
$$
This shows that $ |\widecheck{\psi}| \in L^1( | (\widehat{\eta})_{\check{\varphi}}|)$. Therefore, $\phi_n \widecheck{\psi}$ is dominated by  $ |\widecheck{\psi}| \in L^1( |( \widehat{\eta})_{\check{\varphi}}|)$ and converges pointwise to $\widecheck{\psi}$. Thus, by \eqref{eq3} and the dominated convergence theorem we get
\begin{align*}
 \widehat{\eta}\left( (\widecheck{\psi}) \otimes \widecheck{\varphi} \right)
   &= \lim_n ( \widehat{\eta})_{\check{\varphi}} \left( \phi_n \widecheck{\psi} \right)  = (\widehat{\eta})_{\check{\varphi}} \left( \widecheck{\psi} \right) \,.
\end{align*}
Finally, by the Fourier transformability of $\eta$ we have
$$
\widehat{\eta}\left( \widecheck{\psi} \otimes \widecheck{\varphi} \right)= \eta \left( \psi \otimes \varphi \right) =\gamma(\psi) \,.
$$
Therefore, we proved that for all $\psi \in K_2(G)$ we have $\widecheck{\psi} \in L^1(|(\widehat{\eta})_{\check{\varphi}}|)$ and
$$
( \widehat{\eta})_{\check{\varphi}} \left( \widecheck{\psi} \right) = \gamma(\psi) \,.
$$
This proves that $\gamma$ is Fourier transformable and
$$
\widehat{\gamma}= (\widehat{\eta})_{\check{\varphi}} \,,
$$
completing the proof.

\end{proof}

\section{Applications}\label{sect discc}

In this section we will discuss the relation \eqref{eq6} and how can it be used to (re)derive some results from \cite{NS21}.

To make the things easier to follow we introduce the notion of strongly admissible functions.

\subsection{Strongly admissible functions for CPS.}

Let us start with the following definition.

\begin{definition} Given a group $H$ of the form $H=\RR^d \times H_0$, with a LCAG $H_0$, a function $h : H \to \CC$ is called
\textbf{strongly admissible} if there exists $f\in \Cu(\RR^d)$ and $\varphi \in \Cc(H_0)$ such that
\begin{itemize}
  \item{} $\| (1+|x|^{2d}) f \|_\infty < \infty$;
  \item{} $h= f \otimes \varphi$.
\end{itemize}
\end{definition}

Next, given a CPS $(G,H,\cL)$, we will denote by $\cM_{\cL}(G \times H)$ the space of $\cL$-periodic measures on $G \times H$. Note that by \cite[Prop.~6.1]{LR}
$$
\cM_{\cL}(G \times H) \subseteq \cM^\infty(G \times H) \,.
$$

\smallskip
We will see below that given a Fourier transformable measure $\gamma$ supported inside Meyer set $\Lambda$, Theorem~\ref{T1} can be used to create a CPS $(G, H=\RR^d \times H_0, \cL)$, a $\cL^0$ periodic measure $\rho(=\widehat{\eta})$ and a strongly admissible function $h$ on $\widehat{H} =\RR^d \times \widehat{H_0}$ such that, \eqref{eq6} gives
$$
\gamma= (\rho)_{h} \,.
$$
This motivates us to closely look at the properties of $(\rho)_h$, where $\cM_{\cL}(G \times H)$ for a CPS $(G, H=\RR^d \times H_0, \cL)$ and a strongly admissible function $h$.
While for applications this will be the dual CPS, we will study in general CPS.

\smallskip
Let us start with the following simple observation which also explains the name "strongly admissible".

Given a CPS $(G, H=\RR^d \times H_0, \cL)$, a measure $\rho \in \cM_{\cL}(G \times H)$ and strongly admissible function $h$, it is obvious that the function $h$ is admissible for $(G, H, \cL,\rho)$ in the sense of \cite[Def.~3.1]{LR}. Therefore, by \cite[Prop.~6.3]{LR}, we can define a translation bounded measure $\rho_h$ on $G$ via
$$
\rho_h(\phi):= \rho (\phi \otimes h) \qquad \forall \phi \in \Cc(G) \,.
$$
This measure is strongly almost periodic by \cite[Thm.~3.1]{LR}. In fact, the strong admissibility of $h$ immediately implies that $\rho_h$ is norm almost periodic.

Indeed, let $(G, H=\RR^d \times H_0, \cL)$, $h=f \otimes \varphi$ be strongly admissible and $\rho \in \cM_{\cL}(G \times H)$.
Let $W \subseteq \widehat{H_0}$ be any compact set containing $\supp (\varphi)$ and let $K, K_1 \subseteq G$ be compact in $G$ with $K \subseteq K_1^\circ$. Then, for all compact sets a standard computation similar to \cite[Lemma~5.2]{NS21} shows that
$$
\| (\rho)_h \|_{K} \leq C \|  \varphi \|_\infty \| (1+|x|^{2d}) f \|_\infty \| \rho \|_{K_1 \times [-\frac{1}{2},\frac{1}{2}]^d \times W } \,,
$$
where
\begin{displaymath}
C:= \left( \sum_{n \in \ZZ^d}  \sup_{z \in n+[-\frac{1}{2},\frac{1}{2}]^d } \frac{1}{1+|z|^{2d}} \right)  < \infty \,.
\end{displaymath}
This immediately gives the following stronger version of \cite[Lemma~5.2]{NS21} .
\begin{fact}\label{F2} Let $(G, H=\RR^d \times H_0, \cL)$ be a CPS, $\rho \in \cM_{\cL}(G \times H)$ and $h \in \Cz(H)$ be strongly admissible. Then $\rho_h$ is a
norm almost periodic measure.
\end{fact}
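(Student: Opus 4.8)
The plan is to exhibit, for every $\eps>0$, a model set contained in the set $P_\eps^A(\rho_h)$ of $\eps$‑norm almost periods of $\rho_h$; since model sets are relatively dense \cite{MOO}, this gives the norm almost periodicity. Translation boundedness of $\rho_h$ is already recorded above, and since different precompact sets with non‑empty interior define equivalent norms we may take $A$ to be a fixed compact neighbourhood of $0$ in $G$. Write $h=f\otimes\varphi$. The first step is to move translations into the internal group: for $x\in L$ the translate $T^{-x^\star}h$ is again strongly admissible, hence $\rho_{T^{-x^\star}h}$ is defined via \cite[Prop.~6.3]{LR}, and using that $(x,x^\star)\in\cL$ is a period of $\rho$ one computes, for all $\phi\in\Cc(G)$,
\begin{equation*}
(T^{-x}\rho_h)(\phi)=\rho\bigl((s,t)\mapsto\phi(s-x)h(t)\bigr)=\rho\bigl((s,t)\mapsto\phi(s)h(t+x^\star)\bigr)=\rho_{T^{-x^\star}h}(\phi)\,,
\end{equation*}
so that $T^{-x}\rho_h-\rho_h=\rho_{\,T^{-x^\star}h-h}$ for every $x\in L$. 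Since the $\star$‑map is a homomorphism, $\oplam(-V)=-\oplam(V)$ for any $V\subseteq H$; hence it suffices to prove that for each $\eps>0$ there is an open precompact neighbourhood $V$ of $0$ in $H$ with $\|\rho_{\,T^{-u}h-h}\|_A<\eps$ for all $u\in V$, for then $-\oplam(V)=\oplam(-V)\subseteq P_\eps^A(\rho_h)$ and $\oplam(-V)$ is a model set.

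To bound $\|\rho_{\,T^{-u}h-h}\|_A$, write $u=(u_1,u_0)\in\RR^d\times H_0$ and split off two tensor products,
\begin{equation*}
T^{-u}h-h=\bigl(T^{-u_1}f-f\bigr)\otimes T^{-u_0}\varphi+f\otimes\bigl(T^{-u_0}\varphi-\varphi\bigr)\,.
\end{equation*}
Here the displayed estimate preceding the statement is, by itself, not enough, since $\|(1+|x|^{2d})(T^{-u_1}f-f)\|_\infty$ need not tend to $0$ as $u_1\to0$; however, the same computation that yields it (tiling $\RR^d$ by unit cubes, in the manner of \cite[Lemma~5.2]{NS21}) gives the sharper bound
\begin{equation*}
\|\rho_{\psi\otimes\chi}\|_A\leq C'\,\|\chi\|_\infty\Bigl(\sum_{n\in\ZZ^d}\,\sup_{z\in n+[-\tfrac12,\tfrac12]^d}|\psi(z)|\Bigr)\,\|\rho\|_{K_1\times[-\tfrac12,\tfrac12]^d\times W_0}
\end{equation*}
for $\psi\in\Cu(\RR^d)$, $\chi\in\Cc(H_0)$ with $\supp(\chi)\subseteq W_0$, where $W_0,K_1,C'$ are independent of $\psi$ and $\chi$ (one keeps the local suprema of $\psi$ rather than extracting $\|(1+|x|^{2d})\psi\|_\infty$; the displayed estimate is recovered since $\sup_{z\in n+[-\frac12,\frac12]^d}|\psi(z)|\leq\|(1+|x|^{2d})\psi\|_\infty\sup_{z\in n+[-\frac12,\frac12]^d}(1+|z|^{2d})^{-1}$).

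Now apply this bound to the two summands, with $W_0$ a fixed compact set containing $\supp(\varphi)-U_0$ for a fixed compact neighbourhood $U_0$ of $0$ in $H_0$ and with $u_0\in U_0$. For the second summand $\|\chi\|_\infty=\|T^{-u_0}\varphi-\varphi\|_\infty\to0$ as $u_0\to0$ by uniform continuity of $\varphi$, while $\sum_n\sup_{z\in n+[-\frac12,\frac12]^d}|f(z)|\leq C\,\|(1+|x|^{2d})f\|_\infty$ (with $C$ the constant displayed above) is a fixed finite number, so this contribution tends to $0$. For the first summand $\|\chi\|_\infty=\|\varphi\|_\infty$ is fixed, so everything rests on the assertion
\begin{equation*}
\sum_{n\in\ZZ^d}\,\sup_{z\in n+[-\tfrac12,\tfrac12]^d}\bigl|f(z+u_1)-f(z)\bigr|\ \xrightarrow[\,u_1\to0\,]{}\ 0\,,
\end{equation*}
which is the heart of the matter: given $\delta>0$, the tail $\sum_{|n|>R}$ is at most $2\,\|(1+|x|^{2d})f\|_\infty\sum_{|n|>R}\sup_{z\in n+[-1,1]^d}(1+|z|^{2d})^{-1}$, the tail of a convergent series, hence $<\delta/2$ for $R$ large, uniformly for $u_1$ in the unit cube; and the remaining finitely many terms with $|n|\leq R$ are each bounded by the modulus of continuity of $f$ at $|u_1|$, so their sum is $<\delta/2$ once $|u_1|$ is small. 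Combining, $\|\rho_{\,T^{-u}h-h}\|_A\to0$ as $u\to0$, which is exactly what was needed. The one real obstacle is the observation above: the crude displayed estimate fails to give this, and must be refined to the per‑cube form, after which the smallness reduces to a routine tail/finite‑part split using the summability of $(1+|z|^{2d})^{-1}$ over $\ZZ^d$ and the uniform continuity of $f$.
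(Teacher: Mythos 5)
Your proof is correct, and it follows the route the paper only gestures at: use $\cL$-periodicity to turn $G$-translates of $\rho_h$ by points of $L$ into internal translates of $h$, split $T^{-u}h-h$ into the two tensor summands, estimate each, and conclude that $P_\eps^A(\rho_h)$ contains a (relatively dense) model set $\oplam(-V)$. The paper itself records only the crude bound $\| \rho_h\|_K \leq C\|\varphi\|_\infty\|(1+|x|^{2d})f\|_\infty\|\rho\|_{K_1\times[-\frac12,\frac12]^d\times W}$ and asserts that the Fact follows ``immediately''; your observation that this displayed bound is, by itself, not enough at the stated level of generality is a genuine and worthwhile point. Indeed, for $f\in \Cu(\RR^d)$ with $\|(1+|x|^{2d})f\|_\infty<\infty$ one need not have $\|(1+|x|^{2d})(T^{-u}f-f)\|_\infty\to 0$ as $u\to 0$ (take, say, $f(x)=\sin(|x|^{4d})/(1+|x|^{2d})$, for which $(1+|x|^{2d})f$ is bounded but not uniformly continuous), and an approximation argument in the weighted sup norm fails for the same reason, so some refinement is unavoidable. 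Your fix --- keeping the per-cube suprema $\sum_n \sup_{n+[-\frac12,\frac12]^d}|\psi|$, which is exactly the intermediate quantity in the ``standard computation'' alluded to, and then doing the tail/finite-part split using the summability of $(1+|z|^{2d})^{-1}$ over the cubes together with the uniform continuity of $f$ --- is correct and is what the argument actually needs. It is worth noting that in the paper's applications (Subsection 5.2) the function $f=\check\varphi\otimes\check\psi$ has Schwartz first factor, and there the crude bound does suffice since $\|(1+|x|^{2d})(T^{-u}\check\varphi-\check\varphi)\|_\infty\to 0$; but as a proof of Fact~\ref{F2} for arbitrary strongly admissible $h$, your refined estimate (or an equivalent argument) is required, so your write-up is not just a completion but a small repair of the paper's sketch.
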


\subsection{Fourier transform of measures with Meyer set support}\label{subs 1}

Fix an arbitrary Meyer set $\Lambda$ and a Fourier transformable measure $\gamma$ with $\supp(\gamma) \subset \Lambda$.

By Theorem~\ref{thm: meyer CPS} and the structure theorem of compactly generated groups, there exists a CPS $(G, \RR^d \times \ZZ^m \times \KK, \cL)$ with compact $\KK$ and a compact $W \subseteq \RR^d \times \ZZ^m \times \KK$ such that
$$
\Lambda \subseteq \oplam(W) \,.
$$
By eventually enlarging $W$ we can assume without loss of generality that
$$
W=W_0 \times F \times \KK
$$
for compact $W_0 \subseteq \RR^d$ and finite $F \subseteq \ZZ^m$.

Set $H_0=\ZZ^m \times \KK$. It is easy to see that we can find function $\varphi \in \Cc^\infty(\RR^d)\cap K_2(\RR^d)$ and $\psi \in K_2(H_0)$ with the following properties:
\begin{itemize}
  \item{} $\phi:= \varphi \otimes \psi \equiv 1$ on $W$.
  \item{} $\widehat{\psi} \in \Cc(\widehat{H_0})$.
\end{itemize}
It follows that
\begin{equation}\label{eq8}
f:=\check{\phi}= \check{\varphi} \otimes \check{\psi}
\end{equation}
is a strongly admissible function of $\widehat{H} =\RR^d \times \widehat{H_0}$.

\smallskip

Next, as in Theorem~\ref{T1}, set $\eta:=\LI_{G, \RR^d \times H,\cL, W}(\gamma)$. Then, by Thm.~\ref{T1} $\eta$ is Fourier transformable. Moreover, since $\supp(\eta) \subseteq \cL$, $\rho=\widehat{\eta}$ is $\cL^0$-periodic \cite{ARMA}. Finally, by \eqref{eq6}, we get the following formula, which describes the Fourier transform of $\gamma$ as the projection in the dual CPS of a $\cL^0$-periodic measure via a strongly admissible function:
\begin{equation}\label{eq9}
\widehat{\gamma}=(\rho)_{f} \,.
\end{equation}
Fact~\ref{F2} then gives the following result.
\begin{corollary} \cite[Thm.~7.1]{NS21} Let $\gamma$ be a measure with Meyer set support. Then, $\widehat{\gamma}$ is norm almost periodic.
\end{corollary}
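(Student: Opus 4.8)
The plan is to deduce the statement directly from the representation of $\widehat{\gamma}$ worked out in Subsection~\ref{subs 1} together with Fact~\ref{F2}. The whole point is that once $\widehat{\gamma}$ has been written as $(\rho)_{f}$, with $\rho$ an $\cL^0$-periodic measure on $\widehat{G}\times\widehat{H}$ and $f\in\Cz(\widehat{H})$ strongly admissible, Fact~\ref{F2} applied in the dual CPS $(\widehat{G},\widehat{H},\cL^0)$ (in the role of $(G,H,\cL)$) says immediately that $(\rho)_{f}$ is norm almost periodic. So nothing substantial is left to prove beyond producing such a $\rho$ and $f$ and checking that Fact~\ref{F2} really applies; I would simply assemble the pieces, noting that, as in Subsection~\ref{subs 1}, one must take $\gamma$ Fourier transformable for $\widehat{\gamma}$ to be defined.

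Concretely I would proceed as follows. By Theorem~\ref{thm: meyer CPS} and the structure theorem for compactly generated LCAG, embed $\supp(\gamma)$ in a weak model set $\oplam(W)$ of a CPS $(G,\RR^d\times\ZZ^m\times\KK,\cL)$ with $\KK$ compact, and, after enlarging $W$, arrange $W=W_0\times F\times\KK$ with $W_0\subseteq\RR^d$ compact and $F\subseteq\ZZ^m$ finite. Put $H_0=\ZZ^m\times\KK$, so that $\widehat{H}=\RR^d\times\widehat{H_0}$ with $\widehat{H_0}=\TT^m\times\widehat{\KK}$ compact; this is exactly the form $\RR^d\times(\text{LCAG})$ needed for strong admissibility. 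Next choose $\varphi\in\Cc^\infty(\RR^d)\cap K_2(\RR^d)$ and $\psi\in K_2(H_0)$ with $\check{\psi}\in\Cc(\widehat{H_0})$ and $\phi:=\varphi\otimes\psi\equiv1$ on $W$; then $f:=\check{\phi}=\check{\varphi}\otimes\check{\psi}$ satisfies $\|(1+|x|^{2d})\check{\varphi}\|_\infty<\infty$ by the smoothness and compact support of $\varphi$, so $f$ is strongly admissible, while $\check{\varphi}\in\Cz(\RR^d)$ by Riemann--Lebesgue and $\check{\psi}\in\Cc(\widehat{H_0})$, whence $f\in\Cz(\widehat{H})$. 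Finally set $\eta:=\LI_{G,H,\cL,W}(\gamma)$: by Theorem~\ref{T1} it is Fourier transformable and \eqref{eq6} gives $\widehat{\gamma}=(\widehat{\eta})_{\check{\phi}}=(\widehat{\eta})_{f}$; since $\supp(\eta)\subseteq\cL$ and every character in $\cL^0$ is trivial on $\cL$, the measure $\rho:=\widehat{\eta}$ is $\cL^0$-periodic, i.e.\ $\rho\in\cM_{\cL^0}(\widehat{G}\times\widehat{H})$. Applying Fact~\ref{F2} to $(\widehat{G},\widehat{H},\cL^0)$, $\rho$ and $f$ then gives that $\widehat{\gamma}=(\rho)_{f}$ is norm almost periodic, which is the assertion.

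The main obstacle is not this final deduction — all the analytic work is already packed into Theorem~\ref{T1} (lifting of Fourier transformability) and Fact~\ref{F2} (norm almost periodicity of projections of periodic measures), both available — but the bookkeeping that makes $f$ simultaneously strongly admissible and an element of $\Cz(\widehat{H})$, and the verification that the dual CPS is the correct arena for Fact~\ref{F2}. The delicate point is the simultaneous choice of $\varphi$ and $\psi$: one needs $\psi\in K_2(H_0)$ with compactly supported transform and with $\varphi\otimes\psi\equiv1$ on $W_0\times F\times\KK$, which is possible because $H_0=\ZZ^m\times\KK$ has a compact open subgroup (so a finite combination of convolutions of indicator-type functions already equals $1$ on $F\times\KK$) and its dual $\widehat{H_0}$ is compact (so "compactly supported transform" is automatic there), and one needs $\varphi\in\Cc^\infty\cap K_2$ precisely so that $\check{\varphi}$ decays faster than $|x|^{-2d}$. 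With these choices in place, everything reduces to substitution into \eqref{eq6} and the single appeal to Fact~\ref{F2}.
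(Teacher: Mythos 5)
Your proposal is correct and follows essentially the same route as the paper: it reproduces the construction of Subsection~\ref{subs 1} (the lift $\eta=\LI_{G,H,\cL,W}(\gamma)$, Theorem~\ref{T1} and \eqref{eq6} giving $\widehat{\gamma}=(\widehat{\eta})_{f}$ with $f=\check{\phi}$ strongly admissible and $\rho=\widehat{\eta}$ being $\cL^0$-periodic) and then concludes by the same single appeal to Fact~\ref{F2} in the dual CPS. One minor slip in a parenthetical justification: $\widehat{H_0}=\TT^m\times\widehat{\KK}$ is not compact in general (since $\widehat{\KK}$ is discrete and possibly infinite), so compact support of $\check{\psi}$ is not automatic; it does, however, follow from your own construction of $\psi$ from indicator-type functions of the compact open subgroup $\{0\}\times\KK$, whose transforms are supported on compact (indeed, finite-times-compact) sets, which is exactly what the paper implicitly uses.
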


\subsection{Generalized Eberlein decomposition}

In this subsection we show a pseudo-compatibility of the mapping $\rho \to (\rho)_f$ of \eqref{eq4}, for $\cL$ periodic $\rho \in \cM_{\cL}(G \times H)$ and strongly admissible $f$, with respect to the Lebesgue decomposition. We explain this below.

First, it is easy to see that the map satisfies
\begin{itemize}
  \item{} if $\rho$ is pure point then $(\rho)_f$ is pure point;
  \item{} if $\rho$ is absolutely continuous then $(\rho)_f$ is absolutely continuous;
  \item{} if $\rho$ is singular continuous then $(\rho)_f$ can have all three spectral components;
\end{itemize}
and hence does not preserves the Lebesgue decomposition. On another hand, for each $\alpha \in \{\mathsf{pp}, \mathsf{ac}, \mathsf{sc}\}$, one can defined an operator
$P_\alpha$ on the space $\cM_{\cL}^\infty(G \times H)$ with the property that for all strongly admissible $f$ and all $\rho \in \cM_{\cL}^\infty(G \times H)$ we have
\begin{equation}\label{eq4}
\left( P_\alpha (\rho)\right)_{f}= \left((\rho)_{f}\right)_{\alpha} \,.
\end{equation}
This can be done simply by first showing that
$$
L_{\alpha}(\sum_{j=1}^m c_j \psi_j \odot \phi_j) := \sum_{j=1}^m c_j \left( \rho_{\phi_j} \right)_{\alpha}(\psi_j)
$$
for all $c_1,...,c_m \in \CC, \psi_1,..., \psi_m \in \Cc(\widehat{G}), \phi_1,..., \phi_m \in \Cc(\widehat{H})$ is well defined, linear and continuous with respect to the inductive topology. Therefore, $L_{\alpha}$ can be uniquely extended to a measure $P_\alpha (\rho)$, which is $\cL$ invariant and satisfies \eqref{eq4}.

\smallskip
Now, exactly as above let $\gamma$ be a Fourier transformable measure supported inside a Meyer set $\Lambda$ and let $(G,H, \cL), \eta, \varphi, \phi, \psi$ be as in Subsection~\ref{subs 1}. Let $f$ be as in \eqref{eq8} and let $\rho= \widehat{\eta}$.

Then, for each $\alpha \in \{\mathsf{pp}, \mathsf{ac}, \mathsf{sc}\}$, the measure $P_\alpha (\rho)$ is the Fourier transform of some measure $\mu$ supported on $\cL^0$ \cite{CRS3}.

Define
$$
\nu:= \sum_{x \in L} \phi(x^\star) \mu(\{ (x,x^\star) \}) \delta_x  \,.
$$
Then, $\supp(\nu) \subseteq \oplam(\supp(\phi))$ and, exactly as in the proof of Theorem~\ref{T1} we get
\begin{align*}
  \widehat{\nu} &= \left( \widehat{\mu} \right)_{h}= \left( P_\alpha (\widehat{\eta}) \right)_{h}  \\
  &=\left((\widehat{\eta})_{h}\right)_{\alpha}= \left( \widehat{\gamma} \right)_{\alpha} \,.
\end{align*}
Therefore, we get
\begin{corollary} \cite[Thm.~4.1]{NS21} Let $\gamma$ be a Fourier transformable measure supported inside a Meyer set $\Lambda$. Then, there exists a model set $\Gamma \supseteq \Lambda$ and three Fourier transformable measures $\gamma_{\mathsf{s}},\gamma_{\mathsf{0s}}, \gamma_{\mathsf{0a}}$ supported inside $\Gamma$ such that
\begin{align*}
  \reallywidehat{\gamma_{\mathsf{s}}} &=\left( \widehat{\gamma} \right)_{\mathsf{pp}}  \\
  \reallywidehat{\gamma_{\mathsf{0s}}} &=\left( \widehat{\gamma} \right)_{\mathsf{sc}}  \\
  \reallywidehat{\gamma_{\mathsf{0a}}} &=\left( \widehat{\gamma} \right)_{\mathsf{ac}}  \\
\end{align*}
\end{corollary}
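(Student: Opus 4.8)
The plan is to follow the scheme of Subsection~\ref{subs 1}: lift $\gamma$ to the product group, split the Fourier transform of the lifted measure into its Lebesgue components via the operators $P_\alpha$, and descend each component back to $G$ through the dual cut and project scheme.

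First I fix the CPS. By Theorem~\ref{thm: meyer CPS} and the structure theorem for compactly generated LCAG's, choose a CPS $(G,H,\cL)$ with $H=\RR^d\times H_0$, $H_0=\ZZ^m\times\KK$ and $\KK$ compact, and (after enlarging) a compact window $W=W_0\times F\times\KK$ with $W_0\subseteq\RR^d$ having non-empty interior and $F\subseteq\ZZ^m$ finite, such that $\Lambda\subseteq\oplam(W)$. Pick $\varphi\in\Cc^\infty(\RR^d)\cap K_2(\RR^d)$ and $\psi\in K_2(H_0)$ with $\phi:=\varphi\otimes\psi\equiv1$ on $W$ and $\widehat\psi\in\Cc(\widehat{H_0})$, and set $f:=\check\phi$, which by~\eqref{eq8} is strongly admissible on $\widehat H=\RR^d\times\widehat{H_0}$. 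Put $\Gamma:=\oplam(\supp\phi)$; since $\supp\phi$ is compact and contains the open set $W_0^\circ\times F\times\KK$, this is a genuine model set, $\Lambda\subseteq\oplam(W)\subseteq\Gamma$, and $\Gamma$ does not depend on $\alpha$.

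Now let $\eta:=\LI_{G,H,\cL,W}(\gamma)$ and $\rho:=\widehat\eta$. By Theorem~\ref{T1}, $\eta$ is Fourier transformable, $\widehat\gamma=(\rho)_f$ by~\eqref{eq6} (this is~\eqref{eq9}), and $\rho$ is $\cL^0$-periodic since $\supp\eta\subseteq\cL$. Fix $\alpha\in\{\mathsf{pp},\mathsf{sc},\mathsf{ac}\}$. The operator $P_\alpha$ attached to the dual CPS sends $\rho$ to a $\cL^0$-periodic measure $P_\alpha(\rho)$, so by \cite{CRS3} (Poisson summation for the lattice $\cL^0$) it is the Fourier transform of a measure $\mu=\mu_\alpha$ on $G\times H$ supported on $\cL$. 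Define
\[
\gamma_\alpha:=\sum_{x\in L}\phi(x^\star)\,\mu(\{(x,x^\star)\})\,\delta_x ,
\]
which is supported inside $\oplam(\supp\phi)=\Gamma$ since $\phi$ vanishes off $\supp\phi$. From the definition of $\gamma_\alpha$ and the fact that $\mu$ is pure point on $\cL$ one gets $\gamma_\alpha(\psi)=\mu(\psi\otimes\phi)$ for every $\psi\in K_2(G)$; rerunning the proof of Theorem~\ref{T1} with $f=\check\phi$ in the role of $\check\varphi$ (strong admissibility of $f$ together with $\cL^0$-periodicity of $\widehat\mu=P_\alpha(\rho)$ guarantee, as in Fact~\ref{F2}, that $(\widehat\mu)_f$ is a measure) then shows that $\gamma_\alpha$ is Fourier transformable with $\widehat{\gamma_\alpha}=(\widehat\mu)_f=(P_\alpha(\rho))_f$. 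Finally~\eqref{eq4} gives $(P_\alpha(\rho))_f=\big((\rho)_f\big)_\alpha=(\widehat\gamma)_\alpha$. Taking $\alpha=\mathsf{pp},\mathsf{sc},\mathsf{ac}$ and naming the resulting measures $\gamma_{\mathsf s},\gamma_{\mathsf{0s}},\gamma_{\mathsf{0a}}$ yields the claim.

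The delicate point is the correct handling of the two mutually dual schemes. One must check that $P_\alpha$, built from the $\cL^0$-invariant extension of the functional $L_\alpha$, genuinely lands in $\cM_{\cL^0}(\widehat G\times\widehat H)$, so that \cite{CRS3} applies and $\mu$ is really supported on the lattice $\cL$; and the computation of $\widehat{\gamma_\alpha}$ invokes formula~\eqref{eq6} a second time — now for the dual CPS, with the admissible function $f$ absorbed into the descent as the weight $\phi(x^\star)$ — so one has to track that the two applications of~\eqref{eq6} chain together through~\eqref{eq4} and that the support remains inside $\oplam(\supp\phi)$, whose window has non-empty interior. Beyond this bookkeeping, every individual step is a definition chase or a rerun of an argument already established, so I expect no genuinely new obstacle.
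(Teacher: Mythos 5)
Your proposal is correct and follows essentially the same route as the paper: the same lift $\eta=\LI_{G,H,\cL,W}(\gamma)$, the same strongly admissible $f=\check\phi$, the operators $P_\alpha$ applied to the $\cL^0$-periodic measure $\rho=\widehat\eta$, descent of the measure $\mu_\alpha$ (given by \cite{CRS3}) with the weight $\phi(x^\star)$, and a rerun of the Theorem~\ref{T1} argument combined with \eqref{eq4}. The only additions are minor bookkeeping the paper leaves implicit (e.g.\ checking that $\oplam(\supp\phi)$ is a genuine model set and that $\mu_\alpha$ is supported on $\cL$ rather than $\cL^0$), which are consistent with the paper's intent.
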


\subsection{Discussion}

We have seen in this section that the Fourier transform of a measure $\gamma$ supported inside a Meyer set $\Lambda$ can be describe via \eqref{eq9} as the projection in the dual CPS of a $\cL^0$-periodic measure via a strongly admissible function. We used this result to (re)derive properties of $\widehat{\gamma}$, and we expect that this connection will lead to some new applications in the future. Indeed, while now we know quite a  few properties of the Fourier transform of measures with Meyer set support \cite{JBA,NS1,NS2,NS5,NS11,NS21,NS21b,NS21c} we know much more about fully periodic measures in LCAG (see for example \cite{CRS3}). Moreover, the strong admissibility of $f$
is likely to transfer many properties from $\rho$ to $\rho_f$. It is also worth pointing out that, while the strong admissibility of $f$ was sufficient to derive the conclusions in this section, in fact $f$ can be chosen of the form $f : \RR^d \times (\TT^m \times \widehat{\KK})$
$$
f := g \otimes P \otimes \psi :\RR^d \times \TT^m \times \widehat{\KK} \to \CC
$$
with $g \in \cS(\RR^d)$ being the Fourier transform of some $\varphi \in \Cc^\infty(\RR^d)$; $P$ being a trigonometric polynomial that is a sum of characters
$$
P= \sum_{j=1}^m \chi_j \qquad ; \qquad \chi_1,\ldots , \chi_j \in \widehat{\TT^m}
$$
and $\psi \in \Cc(\widehat{\KK})$ being the characteristic function of $\{ 0 \}$. This seems to be much stronger than strong admissibility and potentially of help in the future.

\subsection*{Acknowledgments} We are grateful to Michael Baake and Christoph Richard  for many insightful discussions which inspired this manuscript. This work was supported by NSERC with grant 2020-00038, and the author is grateful for support.

\end{document}